\documentclass[12pt,reqno]{amsart}

\usepackage{amsmath, amsthm, amssymb,amsfonts, commath, mathrsfs, csquotes} \usepackage{xcolor}

\newcommand{\Z}{\mathbb{Z}}

\newcommand{\CO}{\mathcal{O}}
\usepackage[nameinlink,capitalize]{cleveref}
\newtheorem{thm}{Theorem} [section]
\newtheorem{lem}[thm]{Lemma}
\newtheorem{prop}[thm]{Proposition}

\theoremstyle{definition}

\theoremstyle{remark}

\numberwithin{equation}{section}

\begin{document}
	
\title[Linear recurrence and repdigits]{Linear recurrence sequences and palindromic concatenations of two repdigits in base $\beta$}

\author{Ruofan Li} 
\email{liruofan@jnu.edu.cn}
\address{Department of Mathematics, Jinan University, Guangzhou, China, 510632}
\subjclass[2020]{11B37, 11D61, 11J86, 11A63}
\keywords{linear recurrence sequences, repdigits, palindromic concatenations}

\begin{abstract}
Let $\beta$ be a non-unit real algebraic integer greater than one and $\{a_{n}\}_{n \geq 0}$ be a sequence satisfying a linear recurrence relation $a_{n+3}=aa_{n+2}+ba_{n+1}+ca_{n}$. Under certain conditions, we prove that the number of $a_{n}$ which are palindromic concatenations of two repdigits in base $\beta$ is finite.
\end{abstract}

\maketitle

\section{Introduction}
A repdigit in base ten is an integer of the form
\[\overline{\underbrace{d\cdots d}_{m \text{ times }}} = d\sum_{i=0}^{m-1} 10^{i}, \]
where $d \in \{0,1,\ldots,9\}$, and we say an integer is a palindromic concatenation of two repdigits if it equals
\[\overline{\underbrace{d_{1}\cdots d_{1}}_{l \text{ times }} \underbrace{d_{2}\cdots d_{2}}_{m \text{ times }} \underbrace{d_{1}\cdots d_{1}}_{l \text{ times }}}= d_{1}\sum_{i=l+m}^{2l+m-1} 10^{i} + d_{2}\sum_{i=l}^{l+m-1} 10^{i} + d_{1}\sum_{i=0}^{l-1} 10^{i}, \]
for some $d_{1},d_{2} \in \{0,1,\ldots,9\}$ and $l,m \geq 1$. In \cite{CD21}, Chalebgwa and Ddamulira showed that $151$ and $616$ are the only two Padovan numbers which are palindromic concatenations of two distinct repdigits. Recently, similar results were obtained by Ddamulira, Emong and Mirumbe \cite{DEM24} for Narayana's cows sequence, Batte \cite{Batte25} for Lucas number and Ddamulira \cite{Ddamulira25} for Tribonacci-Lucas numbers. 

It is commonly believed that most results in base ten should remain valid in other bases.
Indeed, Ad\'edji, Filipin, Rihane and Togb\'e \cite{AFRT25} proved finiteness of Pell and Pell–Lucas numbers that are concatenations of two repdigits in base $b$ for any integer $b \geq 2$, which generalizes earlier work of Alahmadi, Altassan, Luca and Shoaib \cite{AALS21} in base ten. A similar result of Ray and Bhoi \cite{RB24} determined all Narayana numbers that are concatenations of two repdigits in base $b$ with $2 \leq b \leq 9$.

Let $K$ be a number field with ring of integers $\CO_{K}$ and $\beta \in \CO_{K}$ be a non-unit real algebraic integer greater than one. The main purpose of this article is to prove that for a class of recurrence sequences, each of them only has finitely many terms which are palindromic concatenation of two repdigits in base $\beta$. We say an algebraic integer $\alpha \in \CO_{K}$ has base $\beta$ expansion $\overline{(d_{m}\cdots d_{1})}_{\beta}$ if 
\[\alpha = d_{m} \beta^{m-1} + \cdots+d_{1},\]
where $d_{1}, \ldots, d_{m} \in \{0,1,\dots,\abs{N(\beta)}-1\}$, $d_{m} \neq 0$ and $N(\beta)$ is the norm of $\beta$. Note that unlike the representation of positive integers in integer base $b \geq 2$, an algebraic integer could have no base $\beta$ expansion or more than one base $\beta$ expansions, the interested reader is referred to \cite{Kovacs81}.  

Let $\{a_{n}\}_{n \geq 0}$ be a sequence satisfying a linear recurrence relation $a_{n+3}=aa_{n+2}+ba_{n+1}+ca_{n}$, where $a,b,c$ are algebraic integers. It is well-known that if the characteristic polynomial $f(X)=X^3-aX^2-bX-c$ has three distinct roots $z_1, z_2, z_3$, then 
\begin{equation*} 
a_{n} = A_{1} z_1^n +A_{2} z_2^n  +A_{3} z_3^n \text{ for all } n\geq 0,
\end{equation*} 
where $A_{1},A_{2},A_{3}$ are determined by $a_0$, $a_1$ and $a_2$. The main theorem of this paper is as follows.

\begin{thm} \label{thm:main}
Suppose $K$ is a number field, $\beta>1$ is a real algebraic integer in $K$ with $\abs{N(\beta)}>1$ and $f(x) \in \CO_{K}[X]$ is a polynomial with three distinct roots $z_1, z_2, z_3$, such that $z_1>1$ is a real root, $\abs{z_2} \leq 1$ and $\abs{z_3} \leq 1$. Let $A_{1},A_{2},A_{3}$ be three algebraic numbers. If $A_{1} \neq 0$ and $z_1 \notin K$, then there are only finitely many solutions $$(l,m,n,d_{1},d_{2}) \in \Z_{\geq 1}^{3} \times \{1,\dots,\abs{N(\beta)}-1\} \times \{0,1,\dots,\abs{N(\beta)}-1\}$$ to the Diophantine equation
\begin{equation} \label{eq:thm}
A_{1} z_1^n +A_{2} z_2^n  +A_{3} z_3^n = d_{1}\sum_{i=l+m}^{2l+m-1} \beta^{i} + d_{2}\sum_{i=l}^{l+m-1} \beta^{i} + d_{1}\sum_{i=0}^{l-1} \beta^{i}.
\end{equation}
\end{thm}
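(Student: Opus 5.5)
Write the right-hand side in closed form:
\[
(\beta-1)\,\mathrm{RHS} = d_1\beta^{2l+m} - (d_1-d_2)\beta^{l+m} + (d_1-d_2)\beta^{l} - d_1 .
\]
Since $\abs{z_2},\abs{z_3}\le 1$, the left-hand side of \eqref{eq:thm} equals $A_1z_1^n+O(1)$. The plan is the standard Baker method, applied three times. Each time we peel off one more term, obtain a linear form in logarithms that is exponentially small, and compare it with a lower bound of Matveev type, which we use in its form for algebraic numbers in a number field. At the end everything is bounded in terms of $n$, and a final comparison yields an absolute bound.

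\textbf{Step 1.} Comparing sizes gives $z_1^n \asymp \beta^{2l+m}$, so $n \asymp 2l+m$. Rearranging,
\[
A_1(\beta-1)z_1^n - d_1\beta^{2l+m} = -(d_1-d_2)\beta^{l+m}+O(\beta^{l}) + O(1),
\]
and dividing by $d_1\beta^{2l+m}$ gives
\[
\Lambda_1 = \frac{A_1(\beta-1)}{d_1}\, z_1^n\beta^{-(2l+m)} - 1 \ll \beta^{-l}.
\]
We need $\Lambda_1\neq0$, and this is exactly where $z_1\notin K$ is used. If $\Lambda_1=0$, then $z_1^n$ would lie in $K(A_1)$. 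To avoid fixing fields, we instead argue via conjugation: apply an automorphism of the Galois closure that fixes $K(A_1,\dots)$ suitably and moves $z_1$ to $z_2$ or $z_3$. The resulting identity would force $\abs{A_1^\sigma z_j^n}$ to grow like $\beta^{2l+m}$, while $\abs{z_j}\le 1$. Making this rigorous needs care, since $A_i$ may not lie in $K$. A cleaner route is to note that $z_1^n\in K(A_1)$ for infinitely many $n$ would put some power $z_1^k$ in a fixed field, and then to handle that case separately using the bound on heights. This nondegeneracy is the step I expect to be the main obstacle. An alternative is to use that the equation itself forces $A_1z_1^n+A_2z_2^n+A_3z_3^n\in K$ and then compare Galois conjugates of the whole sum.

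Assuming $\Lambda_1\neq0$, Matveev's theorem with three logarithms ($z_1$, $\beta$, and the constant) and exponents bounded by $n$ gives $l\log\beta \ll \log n$.

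\textbf{Step 2.} Next regroup as
\[
\Lambda_2 = \frac{A_1(\beta-1)}{d_1\beta^{l}-(d_1-d_2)}\, z_1^n\beta^{-(l+m)} - 1 \ll \beta^{-(l+m)}\cdot\beta^{l}=\beta^{-m}.
\]
The coefficient has height $\ll l \ll \log n$, so Matveev now gives $m \ll (\log n)^2$.

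\textbf{Step 3.} Finally,
\[
\Lambda_3 = \frac{A_1(\beta-1)}{d_1\beta^{2l+m}-(d_1-d_2)\beta^{l+m}+(d_1-d_2)\beta^{l}-d_1}\, z_1^n - 1 \ll z_1^{-n}.
\]
This is a two-term linear form whose coefficient has height $\ll (\log n)^2$. Baker's method then gives $n \ll (\log n)^3$, so $n$ is bounded. Consequently $l$, $m$, $d_1$, $d_2$ are bounded as well, and there are only finitely many solutions.

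Nonvanishing of $\Lambda_2$ and $\Lambda_3$ is handled by the same Galois-conjugation argument as in Step 1, again relying on $z_1\notin K$ and $A_1\neq0$. For $\Lambda_3=0$, the identity reads $A_1(\beta-1)z_1^n = (\beta-1)\mathrm{RHS}\in K$. An automorphism sending $z_1\mapsto z_j$ with $j\in\{2,3\}$ then bounds this quantity by $O(1)$ in absolute value, whereas it is large. Carrying this out requires controlling how the automorphism acts on $A_1$, possibly via the relation between $A_1$ and the initial values.
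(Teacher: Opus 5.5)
Your overall architecture is the paper's: the same three linear forms, with Matveev applied each time. However, there is a genuine gap exactly where you flagged it. You never establish that $\Lambda_1,\Lambda_2,\Lambda_3$ are nonzero, and the routes you sketch cannot work.

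Because $A_1,A_2,A_3$ are arbitrary algebraic numbers, the statement $\Lambda_1\neq 0$ is simply false in general. Take $A_1=\beta^5/((\beta-1)z_1^3)$, $d_1=d_2=1$, $(n,l,m)=(3,2,1)$, and choose $A_2,A_3$ so that $A_2z_2^3+A_3z_3^3=-1/(\beta-1)$. This is a genuine solution of \eqref{eq:thm}, and it has $\Lambda_1=0$.

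For the same reason, your other routes fail:
\begin{itemize}
\item An automorphism fixing $K(A_1)$ but moving $z_1$ need not exist; in the example above $K(A_1)=K(z_1)$.
\item Comparing conjugates of the whole sum gives nothing, because you have no control over $\sigma(A_i)$.
\item Rewriting the vanishing as $z_1^n\in K(A_1)$ discards the useful information. What $\Lambda_1=0$ actually says is $A_1z_1^n=d_1\beta^{2l+m}/(\beta-1)\in K$.
\end{itemize}

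The missing idea is to aim only for \emph{vanishing for at most one $n$}, and to eliminate $A_1$ by division.

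\begin{itemize}
\item Fix $(d_1,d_2)$; this is harmless since there are finitely many pairs. If $\Lambda_1$ vanishes at $(n,l,m)$ and at $(n',l',m')$, dividing the two relations cancels $A_1$ and gives $z_1^{n-n'}=\beta^{(2l+m)-(2l'+m')}\in K$.
\item For $\Lambda_2$ and $\Lambda_3$ the same division shows that $z_1^{n-n'}$ is a power of $\beta$ times a quotient of integer polynomials in $\beta$, hence again lies in $K$.
\item Separately, prove that $z_1^k\notin K$ for every $k\ge 1$. Choose $\sigma\in\mathrm{Gal}(\overline{K(z_1)}/K)$ with $\sigma(z_1)\neq z_1$. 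Since $f\in\CO_K[X]$, we have $\sigma(z_1)\in\{z_2,z_3\}$, so $\abs{\sigma(z_1^k)}\le 1<z_1^k$, contradicting $\sigma(z_1^k)=z_1^k$.
\end{itemize}

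Here $\sigma$ only has to fix $K$, so its action on $A_1$ never enters. Hence each $\Lambda_i$ vanishes for at most one $n$ per pair $(d_1,d_2)$, and so $\Lambda_i\neq0$ once $n$ is large. That is all that finiteness requires, and it is also why the result is ineffective.

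With this in place your Steps 1 and 2 go through. Your Step 3, like the paper's third lemma, is in fact superfluous: $l\ll\log n$, $m\ll(\log n)^2$ and $n\asymp 2l+m$ already give $n\ll(\log n)^2$.
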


In other words, Theorem \ref{thm:main} says that the recurrence sequence $a_{n} = A_{1} z_1^n +A_{2} z_2^n  +A_{3} z_3^n$ only has finitely many terms which are palindromic concatenations of two repdigits in base $\beta$. 

\section{Preliminary} \label{sec:pre}

Throughout this article, we use $x \ll y$ and $y \gg x$ to mean that $\abs{x} \leq C\abs{y}$ for some constant $C>0$ that is independent of $(l,m,n,d_{1},d_{2})$. If $x \ll y$ and $y \ll x$, we write $x = O(y)$. Note that since $d_{1}$ and $d_{2}$ take only finitely many values, it suffices to require $C$ being independent of $l,m,n$.

We begin by showing that $2l+m$ and $n$ are of the same magnitude.
\begin{lem} \label{lem:n_and_2l+m}
If equation \eqref{eq:thm} holds, then
\[n \log z_{1} = (2l+m) \log\beta + O(1).\]
In particular, $2l+m = O(n)$.
\end{lem}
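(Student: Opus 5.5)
Write the right-hand side of \eqref{eq:thm} in closed form. Summing the geometric series gives
\[
R := \frac{d_{1}\beta^{2l+m} + (d_{2}-d_{1})\beta^{l+m} + (d_{1}-d_{2})\beta^{l} - d_{1}}{\beta - 1}.
\]
We compare this with the left-hand side $L := A_{1}z_1^n + A_{2}z_2^n + A_{3}z_3^n$. Since $\abs{z_2},\abs{z_3}\le 1$, we have $L = A_{1}z_1^n + O(1)$. On the right, $d_{1}\ge 1$, $\beta>1$ and $l\ge 1$, so
\[
\frac{d_{1}\beta^{2l+m}}{\beta-1}\Bigl(1 + O(\beta^{-l})\Bigr).
\]
More precisely, each of the other three terms is bounded by $(\abs{N(\beta)}-1)\beta^{l+m}/(\beta-1)$. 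Hence there are constants $0<c_1<c_2$, depending only on $\beta$ and $N(\beta)$, such that
\[
c_1\beta^{2l+m} \le R \le c_2\beta^{2l+m}.
\]
For the lower bound, group $R$ as
\[
d_1\beta^{l+m}(\beta^l-1) + d_2\beta^l(\beta^m-1) + d_1(\beta^l-1),
\]
all divided by $\beta-1$. Every summand is nonnegative and the first is $\ge d_1\beta^{2l+m}(1-\beta^{-1})$. This gives $R \ge \beta^{2l+m}/\beta$. The upper bound is immediate from $d_1,d_2\le \abs{N(\beta)}-1$ and the closed form.

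Next, take absolute values in $L=R$. This gives
\[
\abs{A_1}z_1^n = R + O(1),
\]
so $\abs{A_1}z_1^n$ lies between $c_1\beta^{2l+m} - C$ and $c_2\beta^{2l+m} + C$. Since $R\ge \beta^{2l+m-1}\ge \beta^2$ grows, the additive constant $C$ is absorbed once $2l+m$ is large. For the finitely many small values of $2l+m$, $R$ is bounded, so $z_1^n$ is bounded. Therefore $n$ is bounded too, because $z_1>1$ and $A_1\ne 0$, and the claimed relation holds trivially after enlarging the implied constant. In the main case we get
\[
c_1'\beta^{2l+m} \le \abs{A_1}z_1^n \le c_2'\beta^{2l+m}.
\]
Taking logarithms yields $n\log z_1 = (2l+m)\log\beta + O(1)$. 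The implied constant involves $\log\abs{A_1}$, $\log c_i'$ and $\log(\beta-1)$.

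The only delicate point is the lower bound on $R$: $d_2-d_1$ may be negative, so the terms of the closed form cannot be bounded naively. The regrouping into nonnegative pieces above handles this, using $d_1\ge1$. The ``in particular'' statement then follows: $\log\beta>0$ and $\log z_1>0$, so $2l+m \le (n\log z_1 + C)/\log\beta \ll n$, using $n\ge 1$ to absorb the constant. If $n=0$ is allowed, it is covered by the bounded case.
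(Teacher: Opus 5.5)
Your proof is correct and follows essentially the same route as the paper: you sandwich the left side between constant multiples of $z_1^n$ and the right side between constant multiples of $\beta^{2l+m}$, then take logarithms. Your regrouped lower bound $R\ge d_1\beta^{2l+m-1}$ is the paper's bound (which it reads off directly from the nonnegative digits), and your separate treatment of the bounded case, where the additive $O(1)$ from $A_2z_2^n+A_3z_3^n$ cannot yet be absorbed, is slightly more careful than the paper's one-line claim that $a_n$ is comparable to $z_1^n$.
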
 
\begin{proof}
Since $A_{1} \neq 0$, $z_{1}>1$, $\abs{z_2} \leq 1$ and $\abs{z_3} \leq 1$, the equations
\begin{align*}
\abs{a_{n}} &= \abs{A_{1} z_1^n +A_{2} z_2^n  +A_{3} z_3^n} \leq \abs{A_{1}} \abs{z_1}^n +\abs{A_{2}} \abs{z_2}^n  +\abs{A_{3}} \abs{z_3}^n, \\
\abs{a_{n}} &= \abs{A_{1} z_1^n +A_{2} z_2^n  +A_{3} z_3^n} \geq \abs{A_{1}} \abs{z_1}^n -\abs{A_{2}} \abs{z_2}^n  -\abs{A_{3}} \abs{z_3}^n
\end{align*}
lead to $a_{n} = O(z_{1}^{n}).$ 

Since $\beta>1$, we have
\begin{align*}
a_{n} &\leq \left(\abs{N(\beta)}-1\right)\sum_{i=0}^{2l+m-1} \beta^{i} \\
&= \left(\abs{N(\beta)}-1\right) \frac{\beta^{2l+m}-1}{\beta-1} \\
&< \beta^{2l+m} \frac{\abs{N(\beta)}-1}{\beta-1}.
\end{align*}
Hence $z_{1}^{n} \ll \beta^{2l+m}$ and thus $n \log z_{1} \leq (2l+m) \log\beta + O(1)$.

Then we combine $a_{n} = O(z_{1}^{n})$ with the fact that 
\begin{align*}
a_{n} &\geq d_{1} \beta^{2l+m-1}+d_{1} > \beta^{2l+m-1}
\end{align*}
to deduce $z_{1}^{n} \gg \beta^{2l+m}$, and thus $n \log z_{1} \geq (2l+m) \log\beta + O(1)$.
\end{proof}

For any nonzero algebraic number $\eta$, let $h(\eta)$ denote its logarithmic Weil height and we set $h(0)=0$. Basic properties of height function can be found in many books on Diophantine geometry; see, for instance, \cite{BG06}. The ones that we will use are recorded below.

\begin{prop} \label{prop:height}
For any nonzero algebraic numbers $\eta_{1},\eta_{2}$ and any integer $s$, we have
\begin{align*}
h(\eta_{1} \pm \eta_{2}) &\leq h(\eta_{1})+h(\eta_{2})+\log 2,\\
h(\eta_{1} \eta_{2}) &\leq h(\eta_{1}) + h(\eta_{2}), \\
h(\eta_{1}^{s}) &= \abs{s} h(\eta_{1}).
\end{align*}
\end{prop}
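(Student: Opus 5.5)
We work directly from the definition of the absolute logarithmic Weil height as a sum of local contributions; all three properties then reduce to elementary inequalities for real numbers, plus the product formula. Fix a number field $L$ containing $\eta_{1}$ and $\eta_{2}$. For each place $v$ of $L$, let $d_{v}=[L_{v}:\mathbb{Q}_{v}]$ be the local degree and $\abs{\cdot}_{v}$ the absolute value extending the usual one on $\mathbb{Q}$. For nonzero $\eta\in L$ we use
\[
h(\eta)=\frac{1}{[L:\mathbb{Q}]}\sum_{v} d_{v}\log^{+}\abs{\eta}_{v}, \qquad \log^{+}x=\log\max(1,x).
\]
This is independent of the choice of $L$, so we may compute every height in the same field $L$.

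\textbf{Sum.} At an archimedean place $v$ we have $\abs{\eta_{1}\pm\eta_{2}}_{v}\le 2\max(\abs{\eta_{1}}_{v},\abs{\eta_{2}}_{v})$. At a non-archimedean place the ultrametric inequality gives the same bound without the factor $2$. In both cases $\max(1,\cdot)$ of the maximum is at most the product $\max(1,\abs{\eta_{1}}_{v})\max(1,\abs{\eta_{2}}_{v})$. Hence
\[
\log^{+}\abs{\eta_{1}\pm\eta_{2}}_{v}\le \log^{+}\abs{\eta_{1}}_{v}+\log^{+}\abs{\eta_{2}}_{v}+\varepsilon_{v}\log 2,
\]
where $\varepsilon_{v}=1$ if $v$ is archimedean and $\varepsilon_{v}=0$ otherwise. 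We multiply by $d_{v}$ and sum over $v$. Since $\sum_{v\mid\infty}d_{v}=[L:\mathbb{Q}]$, the $\log 2$ terms contribute exactly $\log 2$ after normalising. If $\eta_{1}\pm\eta_{2}=0$, the inequality is trivial because $h(0)=0$.

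\textbf{Product.} Here $\abs{\eta_{1}\eta_{2}}_{v}=\abs{\eta_{1}}_{v}\abs{\eta_{2}}_{v}$, and $\log^{+}(xy)\le\log^{+}x+\log^{+}y$ for $x,y>0$. Summing this over all places gives the second inequality.

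\textbf{Powers.} For $s\ge 0$ we have $\log^{+}(x^{s})=s\log^{+}x$, so $h(\eta_{1}^{s})=s\,h(\eta_{1})$; the case $s=0$ is $h(1)=0$. For negative $s$ it suffices to prove $h(\eta_{1}^{-1})=h(\eta_{1})$ and then apply the nonnegative case to $\eta_{1}^{-1}$. We use the identity $\log^{+}x-\log^{+}(1/x)=\log x$, which gives
\[
h(\eta_{1})-h(\eta_{1}^{-1})=\frac{1}{[L:\mathbb{Q}]}\sum_{v}d_{v}\log\abs{\eta_{1}}_{v}.
\]
The right-hand side is $0$ by the product formula. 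This is the only step that is not a pointwise estimate, and it is the one point needing care: the absolute values $\abs{\cdot}_{v}$ must be normalised compatibly with the $d_{v}$ so that the product formula holds in the form $\sum_{v}d_{v}\log\abs{\eta}_{v}=0$ for all nonzero $\eta\in L$.
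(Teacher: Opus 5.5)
Your proof is correct. The local bounds on $\log^{+}$, the count $\sum_{v\mid\infty}d_{v}=[L:\mathbb{Q}]$ that yields exactly $\log 2$, and the product formula giving $h(\eta_{1}^{-1})=h(\eta_{1})$ all hold as you state them. The paper gives no proof of its own and cites Bombieri--Gubler \cite{BG06}, where these properties are established by essentially this place-by-place argument.
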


The following theorem on linear forms in logarithms is a consequence of the main result of Matveev \cite{Matveev00}.

\begin{thm}\cite[Theorem 9.4.]{BMS06} \label{thm:linear-forms-log}
Suppose $x_{1},\ldots,x_{t}$ are $t$ nonzero elements in a number field $L$ with degree $D$ and $b_{1},\ldots,b_{t}$ be rational integers. Set
\begin{align*}
B &= \max\{\abs{b_{1}},\ldots,\abs{b_{t}}\}, \\
\Lambda &= x_{1}^{b_{1}} \cdots x_{t}^{b_{t}}-1, \\
h'(x_{j}) &= \max\{Dh(x_{j}), \abs{\log{x_{j}}},0.16\}.
\end{align*}
If $\Lambda \neq 0$, then
\[\log\abs{\Lambda} > -3 \cdot 30^{t+4} (t+1)^{5.5} D^{2} (1+\log D)(1+\log tB) \prod_{1 \leq j \leq t} h'(x_{j}).\]
\end{thm}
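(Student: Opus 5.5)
Since this is a quoted result, I would not reprove linear forms in logarithms. Instead I would derive the stated inequality from Matveev's main theorem \cite{Matveev00} by bookkeeping. The plan is: first convert the multiplicative quantity $\Lambda$ into a genuine linear form in logarithms, then apply Matveev's lower bound for that form, and finally check that the resulting constant is dominated by $3\cdot 30^{t+4}(t+1)^{5.5}D^2(1+\log D)(1+\log tB)$.

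\emph{Step 1: reduction.} If $\abs{\Lambda}\geq 1/2$, the claimed inequality is trivial, because its right-hand side is far below $-\log 2$. So assume $\abs{\Lambda}<1/2$. Fix the principal branches $\log x_j$ and set
\[\Lambda_0=b_1\log x_1+\cdots+b_t\log x_t + b_0\log(-1),\]
where $\log(-1)=\pi i$. Choose $b_0\in\Z$ so that the imaginary part of $\Lambda_0$ lies in $(-\pi,\pi]$. Each $\abs{\operatorname{Im}\log x_j}\le \pi$, so $\abs{b_0}\le tB+1$. Then $e^{\Lambda_0}=\Lambda+1$, and the elementary estimate $\abs{\log(1+z)}\le 2\abs{z}$ for $\abs{z}<1/2$ gives $\abs{\Lambda_0}\le 2\abs{\Lambda}$. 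Moreover $\Lambda_0\neq 0$, since $\Lambda\neq0$. In the case where $L$ is real and all $x_j>0$, the term $b_0\log(-1)$ can be dropped, and only $t$ logarithms appear.

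\emph{Step 2: apply Matveev.} Apply Matveev's theorem to $\Lambda_0$, viewed as a linear form in at most $t+1$ logarithms of elements of $L(i)$. This field has degree at most $2D$; in the real case one uses $L$ itself with degree $D$. Matveev's theorem gives
\[\log\abs{\Lambda_0} > -C(t+1)\,D^2(1+\log D)\,(1+\log B')\prod_j A_j,\]
where $C(n)$ is of the order $30^{n+3}n^{4.5}$ up to a small absolute factor, and $A_j\ge \max\{Dh(x_j),\abs{\log x_j},0.16\}=h'(x_j)$. For the extra logarithm $\log(-1)$ one takes $A_0=\pi$. The parameter $B'=\max\abs{b_j}$ satisfies $B'\le tB+1$, so $1+\log B'\ll 1+\log tB$.

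\emph{Step 3: constants.} Combining the two steps gives $\log\abs{\Lambda}\ge \log\abs{\Lambda_0}-\log 2$. The remaining work is to absorb the following into the factor $3\cdot 30^{t+4}(t+1)^{5.5}$:
\begin{itemize}
\item the factor $\pi$ coming from $A_0$;
\item the possible doubling of the degree, which costs a factor $4$ in $D^2$ and a bounded factor in $1+\log D$;
\item the additive $\log 2$ in the final estimate.
\end{itemize}
I expect this final constant-chasing to be the main obstacle, because the explicit form of Matveev's constant differs between the real and complex cases. Careless inclusion of the torsion term can overshoot the stated constant. For that reason I would follow the exact normalization in \cite[Theorem 9.4]{BMS06}, treating the real case separately from the complex case so that the extra logarithm is only paid for when it is actually needed.
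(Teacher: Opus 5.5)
The paper does not prove this statement; it imports it from \cite{BMS06} as a consequence of Matveev's theorem. Your overall route is the standard way such a version is obtained: adjoin the torsion logarithm $\pi i$, apply Matveev's complex bound to a form in $t+1$ logarithms, and absorb the losses. But the statement is nothing beyond explicit constants, and your Step~3 defers exactly that check to \cite[Theorem 9.4]{BMS06}, i.e.\ to the statement being proved. Worse, three of your concrete choices make the stated constants unreachable.

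(i) Passing to $L(i)$ is unnecessary and fatal. Since $-1\in L$ and $h(-1)=0$, $\pi i$ is already a logarithm of an element of $L$, so you may take $A_0=\pi$ over $L$ itself. Over a field of degree $2D$ you lose a factor of at least $4$ in $D^2(1+\log D)$, and your $A_j$ would have to dominate $2Dh(x_j)$ rather than $Dh(x_j)$, costing up to $2^t$. The available slack is tiny. Matveev's simplified constant is $\frac{1}{\varkappa}(\frac{en}{2})^{\varkappa}30^{n+3}n^{3.5}$. For $\varkappa=2$ this is $\frac{e^2}{8}30^{n+3}n^{5.5}$, not of order $30^{n+3}n^{4.5}$; the extra factor $n$ is where $(t+1)^{5.5}$ comes from. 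With $n=t+1$ and $A_0=\pi$ you get $\frac{e^2\pi}{8}\approx 2.90$ against the stated $3$.

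(ii) $b_0$ must be even. We have $e^{b_0\pi i}=(-1)^{b_0}$, and $\mathrm{Im}\,\Lambda_0$ moves in steps of $\pi$, so two values of $b_0$ of opposite parity place it in $(-\pi,\pi]$. For the odd one, $e^{\Lambda_0}=-(1+\Lambda)$ and $|\Lambda_0|>\pi-1$, so $|\Lambda_0|\le 2|\Lambda|$ fails.

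(iii) The bound $|b_0|\le tB+1$ only gives a factor $1+\log(tB+1)$. When $tB=1$ this exceeds $1+\log tB$ by the factor $1+\log 2$, again beyond the slack.

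The fixes are short. Take $b_0$ even, so that $\Lambda_0=\mathrm{Log}(1+\Lambda)$. Then $|\mathrm{Im}\,\Lambda_0|\le|\Lambda_0|\le 2|\Lambda|<1$ gives $\pi|b_0|<tB\pi+1$. Hence $|b_0|\le tB$, all coefficients are at most $tB$, and you get exactly the factor $1+\log tB$.

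Work over $L$ with $A_j=h'(x_j)$ for $j\ge 1$ and $A_0=\pi$. This yields $\log|\Lambda_0|>-2.91\,X$, where $X=30^{t+4}(t+1)^{5.5}D^2(1+\log D)(1+\log tB)\prod_j h'(x_j)$. Since $h'(x_j)\ge 0.16$, we have $X\ge 30^{4}\cdot 4.8^{t}$. So the loss of $\log 2$ from $|\Lambda_0|\le 2|\Lambda|$ is absorbed, and $\log|\Lambda|>-3X$.

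When $L\subset\mathbb{R}$, you can instead replace each $x_j$ by $|x_j|\in L$, which is legitimate because the product is positive once $|\Lambda|<1/2$. Then use Matveev's real bound $1.4\cdot 30^{t+3}t^{4.5}$, which is much smaller than needed.
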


\section{Key Lemmas} \label{sec:lemma}
First we show that the powers of $z_{1}$ cannot be in $K$, which will be important when we apply Theorem \ref{thm:linear-forms-log}.

\begin{lem} \label{lem:n-power}
If $z_{1} \notin K$, then for any $n \geq 1$, we have $z_{1}^{n} \notin K$.
\end{lem}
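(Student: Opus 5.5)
We argue by contradiction, using Galois conjugation over $K$ together with the size conditions on the roots. Regard $K$ as a subfield of $\mathbb{C}$, as is implicit in the statement of Theorem \ref{thm:main}, and suppose that $z_{1}^{n} \in K$ for some $n \geq 1$.

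First we locate a $K$-conjugate of $z_{1}$ among $z_{2}, z_{3}$. Let $g \in K[X]$ be the minimal polynomial of $z_{1}$ over $K$. Since $f \in \CO_{K}[X] \subset K[X]$ and $f(z_{1})=0$, the polynomial $g$ divides $f$. Hence every root of $g$ lies in $\{z_{1},z_{2},z_{3}\}$, and since $f$ has distinct roots, so does $g$. The hypothesis $z_{1} \notin K$ means $\deg g \geq 2$. Therefore $g$ has a root $z_{j}$ with $j \in \{2,3\}$ and $z_{j} \neq z_{1}$.

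Next we transport $z_{1}$ to $z_{j}$ by an embedding. By the standard extension property of field embeddings, there is a field embedding $\sigma \colon K(z_{1}) \to \mathbb{C}$ with $\sigma|_{K}=\mathrm{id}$ and $\sigma(z_{1})=z_{j}$. Since $z_{1}^{n} \in K$, it is fixed by $\sigma$, so
\[
z_{1}^{n} = \sigma(z_{1}^{n}) = \sigma(z_{1})^{n} = z_{j}^{n}.
\]

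Finally we compare absolute values. Taking absolute values in the last display gives $z_{1}^{n} = \abs{z_{j}}^{n}$. Because $z_{1}>1$ and $n \geq 1$, the left side is strictly greater than $1$. Because $\abs{z_{j}} \leq 1$, the right side is at most $1$. This contradiction shows that $z_{1}^{n} \notin K$ for every $n \geq 1$.

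There is no real obstacle in this argument. The only point needing care is that the conjugate $z_{j}$ must be a root of $f$ other than $z_{1}$, so that the hypothesis $\abs{z_{j}} \leq 1$ applies. This is exactly what the divisibility $g \mid f$ and the condition $z_{1} \notin K$ together guarantee.
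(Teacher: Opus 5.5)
Your proof is correct and is essentially the paper's argument: move $z_1$ to $z_2$ or $z_3$ by a $K$-conjugation, then compare $\abs{z_1}^n>1$ with $\abs{z_j}^n\le 1$. The only difference is that you use an embedding of $K(z_1)$ obtained from the minimal polynomial $g \mid f$ rather than an automorphism of the Galois closure. This makes explicit why the conjugate is a root of $f$ other than $z_1$.
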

\begin{proof}
Assume $z_{1}^{n} \in K$. Let $\overline{K(z_{1})}$ be the Galois closure of $K(z_{1})$ over $K$ and choose $\sigma \in \mathrm{Gal}(\overline{K(z_{1})}/K)$ that does not fix $z_{1}$. Since $z_{1}$ is a root of $f(X)$, we must have $\sigma(z_{1}) = z_{2}$ or $z_{3}$, hence $\abs{\sigma(z_{1})} \leq 1$. On the other hand, $z_{1}^{n} \in K$ implies that $\sigma(z_{1}^{n}) = z_{1}^{n}$, which leads to the following contradiction.
\[1 \geq \abs{\sigma(z_{1})}^{n} = \abs{\sigma(z_{1}^{n})} = \abs{z_{1}^{n}} = \abs{z_{1}}^{n}>1.\]
\end{proof}

Suppose $(l,m,n,d_{1},d_{2})$ is a solution to equation \eqref{eq:thm}, then
\begin{align*} 
&A_{1}z_1^n +A_{2}z_2^n  +A_{3}z_3^n \\
=& d_{1} \beta^{l+m}  \frac{\beta^{l}-1}{\beta-1} + d_{2} \beta^{l} \frac{\beta^{m}-1}{\beta-1} + d_{1} \frac{\beta^{l}-1}{\beta-1} \nonumber\\
=& \frac{1}{\beta-1} \left(d_{1}\beta^{2l+m} - (d_{1}-d_{2})\beta^{l+m} + (d_{1}-d_{2})\beta^{l} - d_{1}\right).
\end{align*}
So
\begin{align} \label{eq:main}
&(\beta-1)(A_{1}z_1^n +A_{2}z_2^n  +A_{3}z_3^n) \nonumber\\
=& d_{1}\beta^{2l+m} - (d_{1}-d_{2})\beta^{l+m} + (d_{1}-d_{2})\beta^{l} - d_{1}.
\end{align}

We are going to rewrite equation \eqref{eq:main} in three different ways and apply Theorem \ref{thm:linear-forms-log} to deduce three inequalities.

\begin{lem} \label{lem:lamdba1}
If equation \eqref{eq:thm} holds, then
\[l \ll \log n.\]
\end{lem}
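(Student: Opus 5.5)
Rearrange \eqref{eq:main} by keeping on one side the dominant terms that share the factor $\beta^{2l+m}$ and putting everything else on the other:
\[(\beta-1)A_{1}z_1^n - d_{1}\beta^{2l+m} = -(d_{1}-d_{2})\beta^{l+m} + (d_{1}-d_{2})\beta^{l} - d_{1} - (\beta-1)(A_{2}z_2^n+A_{3}z_3^n).\]
Since $\abs{z_2},\abs{z_3}\le 1$ and $\beta>1$, the right-hand side is $\ll \beta^{l+m}$. Dividing by $d_{1}\beta^{2l+m}$ (recall $d_1\ge 1$) gives
\[\Lambda_1 := \frac{(\beta-1)A_{1}}{d_{1}}\, z_1^{n}\beta^{-(2l+m)} - 1, \qquad \abs{\Lambda_1} \ll \beta^{-l}.\]

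Next I will apply Theorem \ref{thm:linear-forms-log} with $t=3$, working in $L=K(z_1,A_1)$, whose degree $D$ is bounded independently of the solution. I take $x_1=(\beta-1)A_1/d_1$, $x_2=z_1$, $x_3=\beta$ and exponents $b_1=1$, $b_2=n$, $b_3=-(2l+m)$. By Lemma \ref{lem:n_and_2l+m} we have $B\ll n$. The heights $h'(x_j)$ are bounded by constants: this is clear for $z_1$ and $\beta$, and for $x_1$ it follows from Proposition \ref{prop:height} because $d_1$ takes only finitely many values. Theorem \ref{thm:linear-forms-log} then gives $\log\abs{\Lambda_1} \gg -(1+\log n)$. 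Comparing this with the upper bound $\log\abs{\Lambda_1}\le -l\log\beta+O(1)$ yields $l\ll \log n$, which is the claim. (We may assume $n$ is large, since small $n$ give finitely many $l$ by Lemma \ref{lem:n_and_2l+m}.)

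The main obstacle is to show that $\Lambda_1\neq 0$. If $\Lambda_1=0$, then $z_1^{n} = d_1\beta^{2l+m}/((\beta-1)A_1)$. When $A_1\in K$ the right-hand side lies in $K$, which contradicts Lemma \ref{lem:n-power}. If $A_1$ is not assumed to lie in $K$, I would instead argue by conjugation, as in the proof of Lemma \ref{lem:n-power}. Substituting $\Lambda_1=0$ back into \eqref{eq:main} gives
\[(\beta-1)(A_2z_2^n+A_3z_3^n) = -(d_1-d_2)\beta^{l+m}+(d_1-d_2)\beta^l-d_1.\]
If $d_1\neq d_2$, the left side is bounded while the right side grows like $\beta^{l+m}$, so this can hold only for bounded $l+m$, and hence by Lemma \ref{lem:n_and_2l+m} only for bounded $n$, i.e.\ for finitely many solutions. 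If $d_1=d_2$, the relation forces $A_2z_2^n+A_3z_3^n=-d_1/(\beta-1)$, which again I expect to exclude for large $n$ using $z_1\notin K$ together with the Galois action permuting the $z_i$. In the intended setting, where $A_1$ is essentially a conjugate-compatible coefficient, the direct $K$-membership argument via Lemma \ref{lem:n-power} should suffice, and I would present that as the main route.
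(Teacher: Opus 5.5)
Your rearrangement, the bound $\abs{\Lambda_1}\ll\beta^{-l}$, and the application of Theorem \ref{thm:linear-forms-log} (same $x_j$, $b_j$, and $B\ll n$) match the paper's argument. The gap is in the non-vanishing of $\Lambda_1$.

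The theorem does \emph{not} assume $A_1\in K$. The paper remarks that $A_1\in K$ is exactly the extra information that would make the result effective. So your ``main route'' does not cover the stated hypotheses.

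Your fallback is correct when $d_1\neq d_2$. For $d_1=d_2$, however, you only state an expectation, and the proposed mechanism cannot work. The relation $(\beta-1)(A_2z_2^n+A_3z_3^n)=-d_1$ involves neither $z_1$ nor $A_1$. Since $A_2,A_3$ are arbitrary, it can hold for infinitely many $n$: for instance $z_2=1$, $A_3=0$, $A_2=-d_1/(\beta-1)$ is allowed by the hypotheses. So no Galois argument based on $z_1\notin K$ can exclude it. In fact, for suitable $A_1,A_2,A_3$, $\Lambda_1=0$ genuinely occurs at a solution. Hence $\Lambda_1\neq 0$ cannot be proved outright; one can only show that it vanishes rarely.

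The missing idea is to compare two vanishing instances so that the unknown $A_1$ cancels. Suppose $\Lambda_1=0$ both for $(n,l,m,d_1)$ and for $(n',l',m',d_1')$. Dividing the two identities $(\beta-1)A_1z_1^{n}=d_1\beta^{2l+m}$ and $(\beta-1)A_1z_1^{n'}=d_1'\beta^{2l'+m'}$ gives $z_1^{n-n'}=(d_1/d_1')\beta^{2l+m-2l'-m'}\in K$. Lemma \ref{lem:n-power} (applied to $\abs{n-n'}$) then forces $n=n'$.

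Thus $\Lambda_1=0$ for at most one value of $n$, and that $n$ contributes only finitely many $(l,m)$ by Lemma \ref{lem:n_and_2l+m}. For all remaining solutions your linear-forms argument goes through unchanged. This is exactly how the paper handles the non-vanishing, with no assumption on $A_1$ and no case split on $d_1$ versus $d_2$.
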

\begin{proof}
By equation \eqref{eq:main}, we have 
\begin{align*}
&\abs{(\beta-1)A_{1}z_1^n - d_{1}\beta^{2l+m}} \\
=&\abs{-(\beta-1)(A_{2}z_2^n  +A_{3}z_3^n) - (d_{1}-d_{2})\beta^{l+m} + (d_{1}-d_{2})\beta^{l} - d_{1} } \\
\leq& \abs{(\beta-1)(A_{2}z_2^n  +A_{3}z_3^n)} + \abs{ (d_{1}-d_{2})}\beta^{l+m} + \abs{(d_{1}-d_{2})}\beta^{l} + \abs{d_{1}} \\
\leq&O(1)+\left(\abs{N(\beta)}-1\right)(\beta^{l+m}+\beta^{l}+1) \\
\ll& \beta^{l+m}.
\end{align*}
Dividing both sides by $d_{1}\beta^{2l+m}$ and note that $d_{1}$ is bounded, we obtain
\begin{equation} \label{eq:lambda1}
\abs{\frac{(\beta-1)A_{1}}{d_{1}}z_1^n \beta^{-2l-m} - 1} \ll \beta^{-l}.
\end{equation}
Let $\Lambda_{1}=\Lambda_{1}(n,l,m) = \frac{(\beta-1)A_{1}}{d_{1}}z_1^n \beta^{-2l-m} - 1$, we are going to show that $\Lambda_{1} =0$ for at most one $n$. Assume $$ \Lambda_{1}(n,l,m)=\Lambda_{1}(n',l',m')=0,$$ then 
\[\frac{(\beta-1)A_{1}}{d_{1}}z_1^n \beta^{-2l-m} = \frac{(\beta-1)A_{1}}{d_{1}}z_1^{n'} \beta^{-2l'-m'},\]
hence
\[z_1^{n-n'} = \beta^{2l+m-2l'-m'} \in K. \]
Therefore Lemma \ref{lem:n-power} implies that $n=n'$.

For $n$ big enough such that $\Lambda_{1} \neq 0$, apply Theorem \ref{thm:linear-forms-log} with
\[x_{1}=\frac{(\beta-1)A_{1}}{d_{1}},\, x_{2} = z_{1},\, x_{3}=\beta, \, b_{1}=1,\, b_{2}=n,\, b_{3}=-2l-m.\]
Note that $x_{1},x_{2},x_{3}$ are independent of $l,m,n$ and $B=\max\{1,n,2l+m\} \ll n$ by Lemma \ref{lem:n_and_2l+m}, so we have 
\begin{equation*}
\log\abs{\Lambda_{1}} \gg -(1+\log 3n).
\end{equation*}
Combine this with equation \eqref{eq:lambda1}, we obtain 
\[-(1+\log 3n) \ll \log (\beta^{-l}),\] thus $l \ll \log n.$
\end{proof}

\begin{lem} \label{lem:lamdba2}
If equation \eqref{eq:thm} holds, then
\[m \ll l\log n.\]
\end{lem}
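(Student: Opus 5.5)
We rewrite equation \eqref{eq:main} by moving the first two terms of the right-hand side together, so that the dominant part is $(\beta-1)A_{1}z_1^n - \beta^{l+m}\bigl(d_{1}\beta^{l} - (d_{1}-d_{2})\bigr)$. The remaining terms are $-(\beta-1)(A_{2}z_2^n+A_{3}z_3^n) + (d_{1}-d_{2})\beta^{l} - d_{1}$. Since $\abs{z_2},\abs{z_3}\le 1$ and the digits are bounded, this remainder is $\ll \beta^{l}$. Dividing by $(\beta-1)A_{1}z_1^n$, which is $\gg \beta^{2l+m}$ by Lemma \ref{lem:n_and_2l+m}, we get
\begin{equation*}
\abs{\Lambda_{2}} := \abs{\frac{d_{1}\beta^{l}-(d_{1}-d_{2})}{(\beta-1)A_{1}}\, z_1^{-n}\beta^{l+m} - 1} \ll \beta^{-l-m}\le\beta^{-m}.
\end{equation*}

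Next we apply Theorem \ref{thm:linear-forms-log} with $t=3$, taking
\[x_{1}=\frac{d_{1}\beta^{l}-(d_{1}-d_{2})}{(\beta-1)A_{1}},\quad x_{2}=z_{1},\quad x_{3}=\beta,\]
and exponents $b_{1}=1$, $b_{2}=-n$, $b_{3}=l+m$. All three numbers lie in the fixed field $L=K(z_1,A_1)$, so $D$ is bounded. We have $B\ll n$ by Lemma \ref{lem:n_and_2l+m}. The only input that depends on the solution is $x_1$. By Proposition \ref{prop:height},
\[h\bigl(d_{1}\beta^{l}-(d_1-d_2)\bigr)\le l\,h(\beta)+O(1),\]
so $h(x_{1})\ll l$. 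For the term $\abs{\log x_1}$, note that $x_{1}$ is real up to the fixed factor $A_1^{-1}$ and has size $\asymp\beta^{l}$, so $\abs{\log x_1}\ll l$ as well. Hence $h'(x_1)\ll l$, and the theorem gives $\log\abs{\Lambda_{2}}\gg -l(1+\log 3n)$. Comparing with the upper bound $\abs{\Lambda_2}\ll\beta^{-m}$ yields $m\log\beta \ll l\log n$, which is the claim.

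The main obstacle is ensuring $\Lambda_{2}\neq 0$. If $\Lambda_2=0$, then
\[z_1^{n}=\frac{\bigl(d_{1}\beta^{l}-(d_{1}-d_{2})\bigr)\beta^{l+m}}{(\beta-1)A_{1}}.\]
I would first handle this using $z_1^n\notin K$ from Lemma \ref{lem:n-power}, but that requires $A_{1}\in K$, which is not assumed. So instead I would mimic the argument of Lemma \ref{lem:lamdba1}. Suppose $\Lambda_2$ vanishes at two solutions. Taking the quotient eliminates $A_1$ and gives $z_1^{n-n'}\in K$, so Lemma \ref{lem:n-power} forces $n=n'$. Thus vanishing occurs for at most one value of $n$. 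By Lemma \ref{lem:n_and_2l+m}, $2l+m$ is bounded in terms of $n$, so that value of $n$ contributes only finitely many triples $(l,m,n)$. These can be absorbed into the implied constant, and for all other solutions $\Lambda_2\neq0$, so the estimate above applies.
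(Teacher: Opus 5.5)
Your proof is correct and follows essentially the same route as the paper: the same regrouping of \eqref{eq:main}, Matveev's bound with $h'(x_1)\ll l$ as the only solution-dependent factor, and the same quotient argument via Lemma~\ref{lem:n-power} showing that $\Lambda_2$ vanishes for at most one $n$. The differences are cosmetic. You work with the reciprocal of the paper's linear form, dividing by $(\beta-1)A_1z_1^n$ instead of by $(d_1\beta^l-d_1+d_2)\beta^{l+m}$, and you state explicitly that the finitely many exceptional solutions are absorbed into the implied constant.
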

\begin{proof}
By equation \eqref{eq:main}, we have 
\begin{align*}
&\abs{(\beta-1)A_{1}z_1^n - d_{1}\beta^{2l+m}+ (d_{1}-d_{2})\beta^{l+m}} \\
=&\abs{-(\beta-1)(A_{2}z_2^n +A_{3}z_3^n) + (d_{1}-d_{2})\beta^{l} - d_{1} } \\
\leq& \abs{-(\beta-1)(A_{2}z_2^n +A_{3}z_3^n)} + \abs{(d_{1}-d_{2})}\beta^{l} + \abs{d_{1}} \\
\leq&O(1)+\left(\abs{N(\beta)}-1\right)(\beta^{l}+1) \\
\ll& \beta^{l}.
\end{align*}
Dividing both sides by $(d_{1}\beta^{l}-d_{1}+d_{2})\beta^{l+m}$ and note that $d_{1}$ and $d_{2}$ are bounded, we obtain
\begin{equation} \label{eq:lambda2}
\abs{\frac{(\beta-1)A_{1}}{d_{1}\beta^{l}-d_{1}+d_{2}}z_1^n \beta^{-l-m} - 1} \ll \beta^{-m}.
\end{equation}
Let $\Lambda_{2}=\Lambda_{2}(n,l,m) = \frac{(\beta-1)A_{1}}{d_{1}\beta^{l}-d_{1}+d_{2}}z_1^n \beta^{-l-m} - 1$, we are going to show that $\Lambda_{2} =0$ for at most one $n$. Assume $$ \Lambda_{2}(n,l,m)=\Lambda_{2}(n',l',m')=0,$$ then 
\[\frac{(\beta-1)A_{1}}{d_{1}\beta^{l}-d_{1}+d_{2}}z_1^n \beta^{-l-m}= \frac{(\beta-1)A_{1}}{d_{1}\beta^{l'}-d_{1}+d_{2}}z_1^{n'} \beta^{-l'-m'},\]
hence
\[z_1^{n-n'} = \beta^{l+m-l'-m'}\frac{d_{1}\beta^{l}-d_{1}+d_{2}}{d_{1}\beta^{l'}-d_{1}+d_{2}} \in K. \]
Therefore Lemma \ref{lem:n-power} implies that $n=n'$

For $n$ big enough such that $\Lambda_{2} \neq 0$, apply Theorem \ref{thm:linear-forms-log} with
\[x_{1}=\frac{(\beta-1)A_{1}}{d_{1}\beta^{l}-d_{1}+d_{2}},\, x_{2} = z_{1},\, x_{3}=\beta, \, b_{1}=1,\, b_{2}=n,\, b_{3}=-l-m.\]
For $x_{1}$, we have
\begin{align*}
h(x_{1}) &\leq h((\beta-1)A_{1}) + h(d_{1}\beta^{l}-d_{1}+d_{2}) \\
&\leq h((\beta-1)A_{1}) + h(d_{1}\beta^{l}) + h(d_{1}-d_{2}) + \log 2 \\
&\leq h((\beta-1)A_{1}) + h(d_{1})+lh(\beta) + h(d_{1}-d_{2}) + 2\log 2 \\
&\leq lh(\beta) + O(1)
\end{align*}
and
\begin{align*}
\abs{\log{x_{1}}} &\leq \abs{\log ((\beta-1)A_{1})} + \abs{\log (d_{1}\beta^{l}-d_{1}+d_{2})} \\
&\leq O(1) + \log (d_{1}\beta^{l}) + \abs{\log \left(1-\frac{d_{1}-d_{2}}{d_{1}\beta^{l}}\right)}.
\end{align*}
If $d_{1} \geq d_{2}$, then
\begin{align*}
\abs{\log \left(1-\frac{d_{1}-d_{2}}{d_{1}\beta^{l}}\right)} &= -\log \left(1-\frac{d_{1}-d_{2}}{d_{1}}\beta^{-l}\right) \\
&\leq -\log \left(1-\beta^{-l}\right) \\
&\leq -\log \left(1-\beta^{-1}\right).
\end{align*}
If $d_{1} < d_{2}$, then
\begin{align*}
\abs{\log \left(1-\frac{d_{1}-d_{2}}{d_{1}\beta^{l}}\right)} &= \log \left(1+\frac{d_{2}-d_{1}}{d_{1}}\beta^{-l}\right) \\
&\leq \log \left(1+\left(\abs{N(\beta)}-2\right) \beta^{-l}\right) \\
&\leq \log \left(1+\left(\abs{N(\beta)}-2\right) \beta^{-1}\right)
\end{align*}
So in both cases, we have
\begin{equation*}
\abs{\log{x_{1}}} \leq \log (d_{1}\beta^{l})+O(1) = l \log \beta + O(1).
\end{equation*}
Therefore
\[h'(x_{1}) = \max\{Dh(x_{1}), \abs{\log{x_{1}}},0.16\} \ll l.\]

Note that $x_{2},x_{3}$ are independent of $l,m,n$, and $B=\max\{1,n,l+m\} \ll n$ by Lemma \ref{lem:n_and_2l+m}, so we have 
\begin{equation*}
\log\abs{\Lambda_{2}} \gg -(1+\log 3n) l.
\end{equation*}
Combine this with equation \eqref{eq:lambda2}, we obtain 
\[\log (\beta^{-m}) \gg -(1+\log 3n) l,\]
thus $m \ll l\log n.$
\end{proof}

\begin{lem} \label{lem:lamdba3}
For $n$ big enough, we have
\[n \ll (l+2m)\log n.\]
\end{lem}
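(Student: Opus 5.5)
We put every term on the right-hand side of \eqref{eq:main} together with the dominant term, leaving only the small conjugate contributions, and then apply Theorem \ref{thm:linear-forms-log} once more. Write
\[
P(l,m) = d_{1}\beta^{2l+m} - (d_{1}-d_{2})\beta^{l+m} + (d_{1}-d_{2})\beta^{l} - d_{1} .
\]
By \eqref{eq:main}, and since $\abs{z_2},\abs{z_3}\le 1$, we have
\[
\abs{(\beta-1)A_{1}z_1^n - P(l,m)} = \abs{(\beta-1)(A_{2}z_2^n+A_{3}z_3^n)} \ll 1 .
\]
The right-hand side of \eqref{eq:thm} exceeds $\beta^{2l+m-1}$, so $P(l,m)\gg \beta^{2l+m}$. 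Dividing by $P(l,m)$ and using Lemma \ref{lem:n_and_2l+m} ($\beta^{2l+m}\gg z_1^n$), we get
\[
\abs{\Lambda_3}\ll z_1^{-n}, \qquad \Lambda_3=\frac{(\beta-1)A_1}{P(l,m)}\,z_1^{n}-1 .
\]
This is a linear form with $t=2$, $x_1=(\beta-1)A_1/P(l,m)$, $x_2=z_1$, $b_1=1$, $b_2=n$, and $B\ll n$.

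Nonvanishing needs no uniqueness argument this time. If $\Lambda_3=0$, then $z_1^n=P(l,m)/((\beta-1)A_1)$. The element $P(l,m)$ lies in $K$, but $A_1$ need not, so we adjoin $A_1$. The cleanest route is to run Lemma \ref{lem:n-power} over $K(A_1)$, but that requires $z_1\notin K(A_1)$, which is not given. Instead we argue as in Lemmas \ref{lem:lamdba1} and \ref{lem:lamdba2}. If two solutions $(n,l,m)\neq(n',l',m')$ both give $\Lambda_3=0$, dividing the two relations yields $z_1^{n-n'}=P(l,m)/P(l',m')\in K$, so Lemma \ref{lem:n-power} forces $n=n'$. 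Hence $\Lambda_3=0$ for at most one $n$. This is why the statement only claims the bound for $n$ large enough.

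For the height of $x_1$, Proposition \ref{prop:height} gives
\[
h(P(l,m)) \le (2l+m)h(\beta)+(l+m)h(\beta)+l\,h(\beta)+O(1) = (4l+2m)h(\beta)+O(1) \ll l+2m .
\]
Hence $h(x_1)\ll l+2m$. Also $\abs{\log x_1}\le \log P(l,m)+O(1)\ll 2l+m\ll l+2m$. Here $P(l,m)$ is positive and real, $\beta$ is real, and $x_1$ is real up to the fixed constant $(\beta-1)A_1$, so its logarithm is controlled the same way as in Lemma \ref{lem:lamdba2}. Therefore $h'(x_1)\ll l+2m$, while $h'(z_1)$ is a constant.

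Theorem \ref{thm:linear-forms-log} then gives
\[
\log\abs{\Lambda_3}\gg -(1+\log 2n)(l+2m).
\]
Comparing with $\log\abs{\Lambda_3}\le -n\log z_1+O(1)$ yields $n\ll (l+2m)\log n$.

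The main obstacle is the height and logarithm bound for $x_1$. One must check that the coefficient of $l$ and $m$ in $h(P)$ is genuinely linear in the form $l+2m$; any linear combination of $l$ and $m$ would suffice. One must also confirm $P(l,m)>0$ so that $\log$ behaves. This follows since $(\beta-1)^{-1}P(l,m)$ equals the positive right-hand side of \eqref{eq:thm}.
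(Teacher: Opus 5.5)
Your argument is correct and is essentially the paper's. The paper also isolates the bounded error $(\beta-1)(A_2z_2^n+A_3z_3^n)$ (it keeps $-d_1$ on the error side) and proves $\Lambda_3\neq 0$ for large $n$ by exactly your ratio argument via Lemma~\ref{lem:n-power}. Its one difference is in applying Theorem~\ref{thm:linear-forms-log}: it uses $t=3$ with $\beta^{l}$ as a separate generator, so $h(x_1)\le (l+2m)h(\beta)+O(1)$, whereas you use $t=2$ and get $(4l+2m)h(\beta)+O(1)$, which gives the same conclusion.

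Your $t=2$ version also makes visible that $h'(x_1)\asymp 2l+m\asymp n$, so the estimate carries no information beyond Lemma~\ref{lem:n_and_2l+m}. That lemma already gives $n\ll 2l+m\le 2(l+2m)\le 2(l+2m)\log n$ for $n\ge 3$, so the statement holds without any linear form in logarithms.
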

\begin{proof}
By equation \eqref{eq:main}, we have 
\begin{align*}
&\abs{(\beta-1)A_{1}z_1^n - d_{1}\beta^{2l+m}+ (d_{1}-d_{2})\beta^{l+m} - (d_{1}-d_{2})\beta^{l}} \\
=&\abs{-(\beta-1)(A_{2}z_2^n +A_{3}z_3^n) - d_{1} } \\
\leq& \abs{-(\beta-1)(A_{2}z_2^n +A_{3}z_3^n)} + \abs{d_{1}} \\
=&O(1).
\end{align*}
Dividing both sides by $(\beta-1)A_{1}z_1^n$, we obtain
\begin{equation} \label{eq:lambda3}
\abs{1 - \frac{d_{1}\beta^{l+m}- (d_{1}-d_{2})\beta^{m} + (d_{1}-d_{2})}{(\beta-1)A_{1}} \beta^{l}z_1^{-n}} \ll A_{1}^{-1}z_1^{-n}.
\end{equation}
Let $$\Lambda_{3}=\Lambda_{3}(n,l,m) = \frac{d_{1}\beta^{l+m} - (d_{1}-d_{2})\beta^{m} + (d_{1}-d_{2})}{(\beta-1)A_{1}} \beta^{l}z_1^{-n} - 1,$$ we are going to show that $\Lambda_{3} =0$ for at most one $n$. Assume $$ \Lambda_{3}(n,l,m)=\Lambda_{3}(n',l',m')=0,$$ then 
\begin{align*}
&\frac{d_{1}\beta^{l+m}- (d_{1}-d_{2})\beta^{m} + (d_{1}-d_{2})}{(\beta-1)A_{1}} \beta^{l}z_1^{-n} \\
=&\frac{d_{1}\beta^{l'+m'}- (d_{1}-d_{2})\beta^{m'} + (d_{1}-d_{2})}{(\beta-1)A_{1}} \beta^{l'}z_1^{-n'},
\end{align*}
hence
\[z_1^{n-n'} = \beta^{l-l'}\frac{d_{1}\beta^{l+m}- (d_{1}-d_{2})\beta^{m} + (d_{1}-d_{2})}{d_{1}\beta^{l'+m'}- (d_{1}-d_{2})\beta^{m'} + (d_{1}-d_{2})} \in K. \]
Therefore Lemma \ref{lem:n-power} implies that $n=n'$

For $n$ big enough such that $\Lambda_{3} \neq 0$, apply Theorem \ref{thm:linear-forms-log} with
\[x_{1}=\frac{d_{1}\beta^{l+m}- (d_{1}-d_{2})\beta^{m} + (d_{1}-d_{2})}{(\beta-1)A_{1}},\, x_{2} = \beta,\, x_{3}=z_{1}, \]
\[b_{1}=1,\, b_{2}=l,\, b_{3}=-n.\]
For $x_{1}$, we have
\begin{align*}
h(x_{1}) &\leq h((\beta-1)A_{1}) + h(d_{1}\beta^{l+m}- (d_{1}-d_{2})\beta^{m} + (d_{1}-d_{2})) \\
&\leq h((\beta-1)A_{1}) + h(d_{1}\beta^{l+m}) + h((d_{1}-d_{2})\beta^{m}) + h(d_{1}-d_{2}) + 2\log 2 \\
&\leq h((\beta-1)A_{1}) + h(d_{1})+(l+m)h(\beta) + mh(\beta) + 2h(d_{1}-d_{2}) + 4\log 2 \\
&\leq (l+2m)h(\beta) + O(1)
\end{align*}
and
\begin{align*}
\abs{\log{x_{1}}} &\leq \abs{\log ((\beta-1)A_{1})} + \abs{\log (d_{1}\beta^{l+m}- (d_{1}-d_{2})\beta^{m} + (d_{1}-d_{2}))} \\
&\leq O(1) + \log (d_{1}\beta^{l+m}) + \abs{\log \left(1-\frac{(d_{1}-d_{2})(\beta^{m}-1)}{d_{1}\beta^{l+m}}\right)}.
\end{align*}
When $d_{1} \geq d_{2}$, we have
\begin{align*}
\abs{\log \left(1-\frac{(d_{1}-d_{2})(\beta^{m}-1)}{d_{1}\beta^{l+m}}\right)}&=-\log \left(1-\frac{d_{1}-d_{2}}{d_{1}} \frac{\beta^{m}-1}{\beta^{l+m}} \right) \\
&\leq -\log \left(1-\frac{\beta^{m}-1}{\beta^{l+m}}\right) \\
&= -\log \left(1-\frac{1-\beta^{-m}}{\beta^{l}}\right) \\
&< -\log \left(1-\frac{1}{\beta^{l}}\right) \\
&\leq -\log \left(1-\frac{1}{\beta}\right).
\end{align*}
When $d_{1} < d_{2}$, we have
\begin{align*}
\abs{\log \left(1-\frac{(d_{1}-d_{2})(\beta^{m}-1)}{d_{1}\beta^{l+m}}\right)}&= \log \left(1+\frac{d_{2}-d_{1}}{d_{1}} \frac{\beta^{m}-1}{\beta^{l+m}} \right) \\
&\leq \log \left(1+\left(\abs{N(\beta)}-2\right) \frac{\beta^{m}-1}{\beta^{l+m}} \right) \\
&= \log \left(1+\left(\abs{N(\beta)}-2\right) \frac{1-\beta^{-m}}{\beta^{l}} \right) \\
&< \log \left(1+\left(\abs{N(\beta)}-2\right) \frac{1}{\beta^{l}} \right) \\
&\leq \log \left(1+\left(\abs{N(\beta)}-2\right) \frac{1}{\beta} \right).
\end{align*}
So in both cases, we have
\begin{align*}
\abs{\log{x_{1}}} \leq \log (d_{1}\beta^{l+m})+O(1) = (l+m) \log \beta + O(1).
\end{align*}
Note that $x_{2},x_{3}$ are independent of $l,m,n$, and
$B=\max\{1,l,n\} \ll n$ by Lemma \ref{lem:n_and_2l+m}, so we have 
\begin{align*}
\log\abs{\Lambda_{3}} &\gg -(1+\log 3n) \max\{Dh(x_{1}), \abs{\log{x_{1}}},0.16\} \\
&\gg -(1+\log 3n) (l+2m).
\end{align*}
Combine this with equation \eqref{eq:lambda3}, we obtain 
\[\log (z_{1}^{-n})+O(1) \gg -(1+\log 3n) (l+2m),\]
thus $n \ll (l+2m)\log n.$
\end{proof}

\section{Proof of Theorem \ref{thm:main} and further discussion} \label{sec:further}

Now we combine Lemma \ref{eq:lambda1}, Lemma \ref{eq:lambda2} and Lemma \ref{eq:lambda3} to show that
\begin{align*}
n &\ll (l+2m)\log n &\text{ by Lemma \ref{eq:lambda3},} \\
&\ll (l+2l\log n)\log n &\text{ by Lemma \ref{eq:lambda2},} \\
&\ll (\log n+2(\log n)^{2})\log n &\text{ by Lemma \ref{eq:lambda1},} \\
&\ll (\log n)^{3}.
\end{align*}

There are only finitely many $n$ satisfying $n \ll (\log n)^{3}$. For each $n$, the possible values of $l$ and $m$ are bounded by Lemma \ref{lem:n_and_2l+m}. Therefore equation \eqref{eq:thm} only has finitely many solutions. This finishes the proof of Theorem \ref{thm:main}. \qed

We remark that Theorem \ref{thm:main} is not effective. Despite that most inequalities in the proof can be made explicit, we only know that $\Lambda_{1}, \Lambda_{2}, \Lambda_{3}$ are nonzero for $n$ big enough without any explicit bound. In order to have an effective result, certain information about $A_{1}$ needs to be known. For instance, if $A_{1} \in K$, then we could show that $\Lambda_{1}, \Lambda_{2}, \Lambda_{3}$ are nonzero for all $n \geq 1$, hence obtain an effective version of Theorem \ref{thm:main}, although the bound might be too big to carry out computation in practice.

In the present article, we assume that $\beta$ is real. One naturally wonders whether similar conclusion holds for complex $\beta$. The base $\beta$ expansion can be defined in the same way, and the theorem on linear forms in logarithms remains valid. We believe that for most recurrence sequences, there are only finitely many terms which are palindromic concatenations of two repdigits in complex base $\beta$, since such numbers are very rare. However, our method does not work, as when $\beta$ is complex, 
\[\abs{d_{1}\sum_{i=l+m}^{2l+m-1} \beta^{i} + d_{2}\sum_{i=l}^{l+m-1} \beta^{i} + d_{1}\sum_{i=0}^{l-1} \beta^{i}}\]
could be small even when $l$ and $m$ are large, so $2l+m$ and $n$ may not be of the same magnitude, that is, Lemma \ref{lem:n_and_2l+m} fails. 

Finally we briefly discuss a generalization of Theorem \ref{thm:main}. Since the conditions that $z_{2}$ and $z_{3}$ are roots of $f(x)$ with absolute values less than or equal to one are only used to deduce $z_{1}^{n} \notin K$ for all $n \geq 1$ and $A_{2}z_2^n  +A_{3}z_3^n = O(1)$, Theorem \ref{thm:main} could be extended to the following form.

\begin{thm} \label{thm:general}
Suppose $K$ is a number field, $\beta>1$ is a real algebraic integer in $K$ with $\abs{N(\beta)}>1$. Let $A$ be a nonzero algebraic number and $\{B_{n}\}_{n \geq 1}$ be a bounded sequence of algebraic numbers. If $z>1$ is a real number such that $z^{n} \notin K$ for all $n \geq 1$, then there are only finitely many solutions $$(l,m,n,d_{1},d_{2}) \in \Z_{\geq 1}^{3} \times \{1,\dots,\abs{N(\beta)}-1\} \times \{0,1,\dots,\abs{N(\beta)}-1\}$$ to the Diophantine equation
\begin{equation} \label{eq:general}
A z^n + B_{n} = d_{1}\sum_{i=l+m}^{2l+m-1} \beta^{i} + d_{2}\sum_{i=l}^{l+m-1} \beta^{i} + d_{1}\sum_{i=0}^{l-1} \beta^{i}.
\end{equation}
\end{thm}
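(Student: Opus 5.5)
We follow the proof of Theorem \ref{thm:main} essentially verbatim, replacing $A_{1}$ by $A$, $z_{1}$ by $z$, and $A_{2}z_2^n+A_{3}z_3^n$ by $B_{n}$. The roots $z_2,z_3$ entered that proof in only two places: to get $A_{2}z_2^n+A_{3}z_3^n=O(1)$, which is now the hypothesis that $\{B_{n}\}$ is bounded, and to get Lemma \ref{lem:n-power}, which is now the hypothesis $z^{n}\notin K$ for all $n\geq 1$. First, the analogue of Lemma \ref{lem:n_and_2l+m}: since $A\neq 0$, $z>1$ and $B_{n}=O(1)$, we have $\abs{Az^{n}+B_{n}}\asymp z^{n}$ for $n$ large. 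The right side of \eqref{eq:general} lies between $\beta^{2l+m-1}$ and $\beta^{2l+m}(\abs{N(\beta)}-1)/(\beta-1)$. Hence $n\log z=(2l+m)\log\beta+O(1)$ and $2l+m=O(n)$.

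Next we multiply \eqref{eq:general} by $\beta-1$ to get the analogue of \eqref{eq:main}, with $(\beta-1)(Az^{n}+B_{n})$ on the left. We then form the three linear forms
\begin{align*}
\Lambda_{1}&=\tfrac{(\beta-1)A}{d_{1}}z^{n}\beta^{-2l-m}-1,\\
\Lambda_{2}&=\tfrac{(\beta-1)A}{d_{1}\beta^{l}-d_{1}+d_{2}}z^{n}\beta^{-l-m}-1,\\
\Lambda_{3}&=\tfrac{d_{1}\beta^{l+m}-(d_{1}-d_{2})\beta^{m}+(d_{1}-d_{2})}{(\beta-1)A}\beta^{l}z^{-n}-1.
\end{align*}
The upper bounds $\abs{\Lambda_{1}}\ll\beta^{-l}$, $\abs{\Lambda_{2}}\ll\beta^{-m}$ and $\abs{\Lambda_{3}}\ll z^{-n}$ follow exactly as in Lemmas \ref{lem:lamdba1}--\ref{lem:lamdba3}, using only $\abs{(\beta-1)B_{n}}=O(1)$.

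For the lower bounds we must check two things. First, each $\Lambda_{i}$ vanishes for at most one $n$. If $\Lambda_{i}(n,l,m)=\Lambda_{i}(n',l',m')=0$, dividing the two relations gives $z^{n-n'}\in K$, and the constant $A$ cancels. The hypothesis $z^{k}\notin K$ for all $k\geq 1$ then forces $n=n'$. The case $n<n'$ is covered by inverting, since $z^{-k}\in K$ iff $z^{k}\in K$. Second, Theorem \ref{thm:linear-forms-log} needs $x_{1},x_{2},x_{3}$ to lie in a fixed number field $L$. Here we take $L=K(A,z)$, whose degree $D$ is independent of $(l,m,n)$. This requires $z$ to be algebraic, which the statement leaves implicit. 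I would state this as an assumption: the bounded sequence and $z$ are algebraic, and $z$ is algebraic because otherwise the linear forms argument is unavailable. Note that $B_{n}$ itself never enters any $\Lambda_{i}$, so its heights and field of definition are irrelevant. The height bounds $h'(x_{1})\ll 1$, $\ll l$ and $\ll l+2m$ for the three forms are the same computations as before, using Proposition \ref{prop:height}, and $B\ll n$ follows from the first step.

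Combining as in the proof of Theorem \ref{thm:main} gives $l\ll\log n$, $m\ll l\log n$ and $n\ll(l+2m)\log n$. Hence $n\ll(\log n)^{3}$, so $n$ is bounded, and then $l,m$ are bounded by $2l+m=O(n)$. Since $d_{1},d_{2}$ range over finite sets, there are finitely many solutions. The only real obstacle is the non-vanishing step, which is exactly where the hypothesis on $z^{n}$ replaces the Galois argument of Lemma \ref{lem:n-power}. There is also the minor point that $z$ must be algebraic for Matveev's bound to apply.
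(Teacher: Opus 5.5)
Your proposal is correct and is essentially the paper's argument: the paper justifies Theorem \ref{thm:general} only by remarking that $z_2,z_3$ enter the proof of Theorem \ref{thm:main} solely through $A_2z_2^n+A_3z_3^n=O(1)$ and $z_1^n\notin K$, and you rerun Lemmas \ref{lem:n_and_2l+m}--\ref{lem:lamdba3} with those two inputs replaced by the hypotheses, exactly as intended. One small correction is that algebraicity of $z$ need not be added as an assumption: any single solution makes $z^{n}$ equal to $A^{-1}$ times (the right-hand side of \eqref{eq:general} minus $B_{n}$), hence algebraic, so either $z$ is algebraic or there are no solutions at all.
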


In particular, let $\{a_{n}\}_{n\geq 0}$ be an unbounded recurrence sequence whose characteristic polynomial has distinct roots where one real root $z>1$ satisfying $z \notin K$ and the absolute values of all other roots are less than or equal to one. A similar argument as Lemma \ref{lem:n-power} could show that $z^{n} \notin K$ for all $n \geq 1$. Therefore Theorem \ref{thm:general} implies that there are only finitely many terms which are palindromic concatenations of two repdigits in base $\beta$.

{\noindent \bf  Acknowledgements}. The author thank the referees for careful reading of the manuscript and many helpful suggestions. This research was supported by National Natural Science Foundation of China grant number 12401006.


\begin{thebibliography}{10}

\bibitem{AFRT25}
K.~N. Ad\'edji, A.~Filipin, S.~E. Rihane, and A.~Togb\'e.
\newblock Pell or {P}ell-{L}ucas numbers as concatenations of two repdigits in
  base {$b$}.
\newblock {\em Rev. R. Acad. Cienc. Exactas F\'is. Nat. Ser. A Mat. RACSAM},
  119(1):Paper No. 15, 16, 2025.

\bibitem{AALS21}
A.~Alahmadi, A.~Altassan, F.~Luca, and H.~Shoaib.
\newblock Fibonacci numbers which are concatenations of two repdigits.
\newblock {\em Quaest. Math.}, 44(2):281--290, 2021.

\bibitem{Batte25}
H.~Batte.
\newblock Lucas numbers that are palindromic concatenations of two distinct
  repdigits.
\newblock {\em Math. Pannon. (N. S.)}, 31(1):22--33, 2025.

\bibitem{BG06}
E.~Bombieri and W.~Gubler.
\newblock {\em Heights in {D}iophantine geometry}, volume~4 of {\em New
  Mathematical Monographs}.
\newblock Cambridge University Press, Cambridge, 2006.

\bibitem{BMS06}
Y.~Bugeaud, M.~Mignotte, and S.~Siksek.
\newblock Classical and modular approaches to exponential {D}iophantine
  equations. {I}. {F}ibonacci and {L}ucas perfect powers.
\newblock {\em Ann. of Math. (2)}, 163(3):969--1018, 2006.

\bibitem{CD21}
T.~P. Chalebgwa and M.~Ddamulira.
\newblock Padovan numbers which are palindromic concatenations of two distinct
  repdigits.
\newblock {\em Rev. R. Acad. Cienc. Exactas F\'is. Nat. Ser. A Mat. RACSAM},
  115(3):Paper No. 108, 14, 2021.

\bibitem{Ddamulira25}
M.~Ddamulira.
\newblock Tribonacci-{L}ucas {N}umbers that are {P}alindromic {C}oncatenations
  of {T}wo {D}istinct {R}epdigits.
\newblock {\em Math. Pannon. (N. S.)}, 31(2):196--208, 2025.

\bibitem{DEM24}
M.~Ddamulira, P.~Emong, and G.~I. Mirumbe.
\newblock Palindromic concatenations of two distinct repdigits in {N}arayana's
  cows sequence.
\newblock {\em Bull. Iranian Math. Soc.}, 50(3):Paper No. 35, 16, 2024.

\bibitem{Kovacs81}
B.~Kov\'acs.
\newblock Canonical number systems in algebraic number fields.
\newblock {\em Acta Math. Acad. Sci. Hungar.}, 37(4):405--407, 1981.

\bibitem{Matveev00}
E.~M. Matveev.
\newblock An explicit lower bound for a homogeneous rational linear form in
  logarithms of algebraic numbers. {II}.
\newblock {\em Izv. Ross. Akad. Nauk Ser. Mat.}, 64(6):125--180, 2000.

\bibitem{RB24}
P.~K. Ray and K.~Bhoi.
\newblock Narayana numbers which are concatenations of two base {$b$}
  repdigits.
\newblock {\em Integers}, 24:Paper No. A20, 10, 2024.

\end{thebibliography}
\end{document}